
\documentclass[smallextended,referee,envcountsect,]{svjour3}
\smartqed
\usepackage{graphicx}
\journalname{Optimization Letters}

\begin{document}

\title{Global Convergence of Algorithms Based on Unions of Nonexpansive Maps}


\author{Alexander J. Zaslavski}

\institute{Alexander J. Zaslavski \at
             Department of Mathematics \\
              The Technion -- Israel Institute of Technology \\
              Haifa, 32000, Israel\\
              ajzasl@technion.ac.il
              }

\date{Received: date / Accepted: date}

\maketitle

\begin{abstract}
In his recent research  M. K. Tam (2018) considered a framework for the analysis of iterative algorithms which can be described in terms of a structured set-valued operator.
At each point in the ambient space, the value of the operator
can be expressed as a finite union of values of single-valued paracontracting operators.
He showed that the associated fixed point iteration is locally convergent around strong fixed points. This result generalizes a theorem due to Bauschke and Noll (2014).
In the present paper we generalize the result of Tam and show the global convergence of his algorithm for an arbitrary starting point. An analogous result is also proved for the Krasnosel'ski-Mann iterations.
\end{abstract}
\keywords{Convergence analysis \and Fixed point \and Nonexpansive mapping \and Paracontracting operator}
\subclass{47H04 \and  47H10}


\section{Introduction}

For more than sixty years now, there has been a lot of research activity
in the study of the fixed point theory of  nonexpansive operators \cite{alwa,g17,goeb90a,gr84a,khak15,kop12,rz14a,qin,z16a,z18a}.
The starting point of this study is  Banach's celebrated
theorem \cite{ban22a} concerning the existence of a unique fixed point for a strict contraction.
It also concerns the convergence of (inexact) iterates of a nonexpansive mapping to one of its fixed points.
Since that seminal result, many developments have taken place in this field including, in particular, studies of feasibility, common fixed point
problems and variational inequalities, which find important applications in mathematical analysis, optimization theory, and in engineering, medical
and the natural sciences \cite{baubor96a,cen18a,g17,gr17,qin,txy,z16a,z18a}. In particular in \cite{tam18a}, it was considered a framework for the analysis of iterative algorithms, which can be described in terms of a structured set-valued operator. Namely, at every point in the ambient  space, it is assumed that the value of the operator can be expressed as a finite union of values of single-valued paracontracting operators.
For such algorithms it was shown in \cite{tam18a}  that the associated fixed point iteration is locally convergent around strong fixed points.
In \cite{dao} an analogous result was obtained for Krasnosel'ski-Mann iterations.
The  result of \cite{tam18a} generalizes a theorem due to Bauschke and Noll \cite{baus14a}.
In the present paper we generalize the main result of \cite{tam18a} and show the global convergence of the  algorithm for an arbitrary starting point. An analogous result is also proved for the Krasnosel'ski-Mann iterations.

\section{Global convergence of iterates}

Suppose  that $(X,\rho)$ is a metric  space  and that $C \subset X$ is its nonempty, closed set. For every point  $x \in X$ and every positive number $r$ define
$$B(x,r)=\{y \in X:\; \rho(x,y)  \le r\}.$$
For each $x \in X$ and each nonempty set $D\subset X$ set
$$\rho(x,D)=\inf\{\rho(x,y):\;y \in D\}.$$
For every operator  $S:C \to C$ set
$$\mbox{Fix}(S)=\{x \in C:\; S(x)=x\}.$$
Fix
$$\theta \in C.$$
Suppose that the following assumption holds:

(A1) For each  $M>0$ the set $B(\theta,M)\cap C$ is compact.

Assume that  $m$ is a natural number, $T_i:C \to C$, $i=1,\dots,m$ are continuous operators
and that the following assumption holds:

(A2) For every natural number  $i \in \{1,\dots,m\}$, every point  $z \in \mbox{Fix}(T_i)$,  every point  $x \in C$ and every $y \in C\setminus \mbox {Fix}(T_i)$, we have
$$\rho(z,T_i(x)) \le \rho(z,x)$$
and
$$\rho(z,T_i(y))  < \rho(z,y).$$

Note that operators satisfying (A2) are called paracontractions \cite{els}.

Assume that for every point $x \in X$, a nonempty set
$$\phi(x)\subset \{1,\dots,m\} \eqno (1)$$
is given. In other words, $$\phi:X\to 2^{\{1,\dots,m\}}\setminus \{\emptyset\}.$$
Suppose that the following assumption holds:

(A3) For each $x \in C$ there exists $\delta>0$ such that for each $y \in B(x,\delta)\cap C$,
$$\phi(y) \subset \phi(x).$$

Define
$$T(x)=\{T_i(x):\; i \in \phi(x)\} \eqno (2)$$
for each $x \in C$,
$$\bar F(T)=\{z \in C:\;T_i(z)=z,\; i=1,\dots,m\} \eqno (3)$$
and
$$F(T)=\{z \in C:\; z \in T(z)\}. \eqno (4)$$

Assume that
$$\bar F(T)\not =\emptyset.$$
Denote by Card$(D)$ the cardinality of a set $D$. For each $z \in R^1$ set
$$\lfloor z\rfloor =\inf\{i:\; i \mbox{ is an integer and } i \le z\}.$$
In the sequel we suppose that the sum over empty set is zero.

We study the asymptotic behavior of sequences of iterates $x_{t+1} \in F(x_t)$, $t=0,1,\dots$. In particular we are interested in their convergence to a fixed point of $T$. 
This iterative algorithm was introduced in \cite{tam18a} which also contains its  application to sparsity constrained minimisation.

The following result, which is proved in Section 4,
shows that almost all iterates of our set-valued mappings are approximated solutions of the corresponding fixed point problem.
Many results of  this type are collected in \cite{z16a,z18a}.

\begin{theorem}
Assume that $M>0$, $\epsilon \in (0,1)$ and that
$$\bar F(T)\cap B(\theta,M) \not =\emptyset. \eqno (5)$$
Then there exists an integer $Q\ge 1$ such that for each sequence $\{x_i\}_{i=0}^{\infty}\subset C$
 which satisfy
$$\rho(x_0,\theta) \le M $$
and 
$$x_{t+1}\in F(x_t) \mbox { for each integer } t \ge 0$$
the inequality
$$\rho(x_t,\theta)\le 3M$$ holds for all integers $t \ge 0$,
$$\mbox{Card}(\{t \in \{0,1,\dots,\}:\;\rho(x_t,x_{t+1})> \epsilon\})\le Q$$
and $\lim_{t \to \infty}\rho(x_t,x_{t+1})=0$.\end{theorem}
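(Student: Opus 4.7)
The plan is to fix once and for all a reference point $z \in \bar F(T) \cap B(\theta, M)$ and to prove three things in order: Fej\'er monotonicity of the orbit with respect to $z$ (which yields the $3M$ bound), a uniform quantitative paracontraction estimate on the compact set $B(\theta, 3M) \cap C$, and then a bookkeeping step that produces $Q$ and forces the step sizes to vanish.

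First I would note that any admissible iterate has the form $x_{t+1} = T_{i_t}(x_t)$ for some $i_t \in \phi(x_t)$. Since $z \in \mathrm{Fix}(T_{i_t})$, assumption (A2) immediately gives $\rho(z, x_{t+1}) \le \rho(z, x_t)$. Inductively $\rho(z, x_t) \le \rho(z, x_0) \le \rho(z, \theta) + \rho(\theta, x_0) \le 2M$, so by the triangle inequality $\rho(\theta, x_t) \le \rho(\theta, z) + \rho(z, x_t) \le 3M$. Hence the orbit lies in the set $K := B(\theta, 3M) \cap C$, which is compact by (A1).

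The heart of the argument is a uniform quantitative paracontraction estimate: there exists $\delta_0 > 0$, depending only on $M$, $\epsilon$ and $z$, such that whenever $x \in K$ and $i \in \phi(x)$ satisfy $\rho(x, T_i(x)) > \epsilon$, one has
$$\rho(z, T_i(x)) \le \rho(z, x) - \delta_0.$$
I would establish this by contradiction. If no such $\delta_0$ existed, one could pick sequences $x_k \in K$ and $i_k \in \phi(x_k)$ with $\rho(x_k, T_{i_k}(x_k)) > \epsilon$ and $\rho(z, T_{i_k}(x_k)) > \rho(z, x_k) - 1/k$. By compactness of $K$ and the finiteness of the index set $\{1,\dots,m\}$, pass to a subsequence along which $x_k \to x^* \in K$ and $i_k$ equals a constant value $i$. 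Assumption (A3) at $x^*$ forces $i \in \phi(x_k) \subset \phi(x^*)$ for all large $k$, and continuity of $T_i$ gives $T_i(x_k) \to T_i(x^*)$. Taking the limit in the two chosen inequalities yields $\rho(x^*, T_i(x^*)) \ge \epsilon > 0$, so $x^* \notin \mathrm{Fix}(T_i)$, together with $\rho(z, T_i(x^*)) \ge \rho(z, x^*)$, contradicting the strict paracontraction clause of (A2). This compactness--continuity--paracontraction interplay is where all three assumptions (A1)--(A3) must be orchestrated simultaneously and is, to my mind, the main obstacle.

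With the uniform decrement in hand, the remaining conclusions are almost bookkeeping. Each index $t$ with $\rho(x_t, x_{t+1}) > \epsilon$ forces $\rho(z, x_{t+1}) \le \rho(z, x_t) - \delta_0$. Since $\rho(z, x_t) \ge 0$ while $\rho(z, x_0) \le 2M$, this can happen at most $2M/\delta_0$ times, so $Q := \lfloor 2M/\delta_0 \rfloor + 1$ meets the required cardinality bound and depends only on $M$ and $\epsilon$. Finally, applying this cardinality bound with $\epsilon$ replaced by $1/n$ for each positive integer $n$ shows that only finitely many indices $t$ satisfy $\rho(x_t, x_{t+1}) > 1/n$; hence $\limsup_t \rho(x_t, x_{t+1}) \le 1/n$ for every $n$, giving $\lim_t \rho(x_t, x_{t+1}) = 0$.
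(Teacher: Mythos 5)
Your proposal is correct and follows essentially the same route as the paper: Fej\'er monotonicity with respect to a common fixed point $z\in\bar F(T)\cap B(\theta,M)$ gives the $3M$ bound, a compactness--contradiction argument (the paper's Lemma 3.1) yields the uniform decrement $\delta_0$, and telescoping gives the cardinality bound and the vanishing of the step sizes. The only cosmetic difference is that you invoke (A3) inside the compactness argument to place the limiting index in $\phi(x^*)$, which is unnecessary --- the contradiction with (A2) already follows from $\rho(x^*,T_i(x^*))\ge\epsilon$ and $\rho(z,T_i(x^*))\ge\rho(z,x^*)$ alone, and indeed the paper proves the uniform estimate for all $s\in\{1,\dots,m\}$ without reference to $\phi$.
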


The following global convergence result is proved in Section 5.

\begin{theorem}
Assume that  a  sequence $\{x_t\}_{t=0}^{\infty}\subset C$
and  that for each integer $t \ge 0$,
$$x_{t+1}\in F(x_t).$$
Then there exist
$$x_*=\lim_{t \to \infty}x_t$$ and a natural number $t_0$ such that for each integer $t \ge t_0$
$$\phi(x_t)\subset \phi(x_*)$$
and if an integer $i \in \phi(x_t)$ satisfies $x_{t+1}=T_i(x_t)$, then
$$T_i(x_*)=x_*.$$
\end{theorem}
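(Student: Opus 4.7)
The plan is to use Theorem 1 to obtain compactness of the orbit together with asymptotic regularity $\rho(x_t,x_{t+1})\to 0$, then combine (A2), continuity of the $T_i$'s, and (A3) to promote a subsequential limit into a genuine limit via Fej\'er-type monotonicity.

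To invoke Theorem 1, I would pick any $\bar z\in \bar F(T)$ and set $M:=\max\{\rho(x_0,\theta),\rho(\bar z,\theta)\}$, so that the hypothesis (5) of Theorem 1 is satisfied. Theorem 1 then gives $\rho(x_t,\theta)\le 3M$ for every $t$ and $\rho(x_t,x_{t+1})\to 0$. By (A1) the orbit lies in a compact subset of $C$, so some subsequence $(x_{t_k})$ converges to a limit $x_*\in C$. Applying (A3) at $x_*$ produces $\delta>0$ with $\phi(y)\subset\phi(x_*)$ for every $y\in B(x_*,\delta)\cap C$.

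The main technical step is a local dichotomy at $x_*$. For each $i\in\phi(x_*)$ with $T_i(x_*)\ne x_*$, set $\epsilon_i:=\rho(x_*,T_i(x_*))>0$; continuity of $T_i$ lets me choose $\delta_i\in(0,\delta)$ with $\rho(y,T_i(y))>\epsilon_i/2$ for every $y\in B(x_*,\delta_i)\cap C$. Let $\delta_0$ be the minimum of these $\delta_i$ and of $\delta$, and $\epsilon_0$ the minimum of $\epsilon_i/2$ over the (finitely many) such $i$; if no such $i$ exists, then $T_i(x_*)=x_*$ for every $i\in\phi(x_*)$ and the dichotomy is vacuous. The dichotomy then reads: whenever $\rho(x_t,x_*)<\delta_0$ and $\rho(x_t,x_{t+1})<\epsilon_0$, any $i\in\phi(x_t)$ with $x_{t+1}=T_i(x_t)$ must satisfy $T_i(x_*)=x_*$, since (A3) forces $i\in\phi(x_*)$ and the alternative $T_i(x_*)\ne x_*$ would force $\rho(x_t,x_{t+1})>\epsilon_i/2\ge\epsilon_0$, a contradiction.

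Given the dichotomy, convergence of the whole orbit follows by Fej\'er monotonicity. For arbitrary $\eta\in(0,\delta_0)$, I pick $k$ so large that $\rho(x_{t_k},x_*)<\eta$ and $\rho(x_t,x_{t+1})<\epsilon_0$ for all $t\ge t_k$; an induction on $s\ge 0$ using the dichotomy together with the nonexpansive part of (A2) applied at $z=x_*$ yields $\rho(x_*,x_{t_k+s+1})\le\rho(x_*,x_{t_k+s})<\eta$ for every $s$, and since $\eta$ is arbitrary I conclude $x_t\to x_*$. The remaining assertions of the theorem follow by taking $t_0$ large enough that $\rho(x_t,x_*)<\delta_0$ and $\rho(x_t,x_{t+1})<\epsilon_0$ for $t\ge t_0$, then invoking (A3) and the dichotomy once more. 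I expect the main obstacle to be establishing the dichotomy: (A2) alone grants only non-expansion with respect to the fixed-point set of each individual $T_i$, so nothing a priori prevents $x_*$ from being a fixed point of some but not all $T_j$ with $j\in\phi(x_*)$; the asymptotic regularity $\rho(x_t,x_{t+1})\to 0$ produced by Theorem 1, combined with continuity and (A3), is what rules this out near the limit and closes the Fej\'er-monotonicity loop.
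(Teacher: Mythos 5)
Your proposal is correct and follows essentially the same route as the paper: Theorem 2.1 gives boundedness and asymptotic regularity, compactness yields a cluster point $x_*$, and near $x_*$ one combines (A3), continuity, and the separation $\rho(x,T_i(x))>\delta_0$ for the indices $i\in\phi(x_*)$ not fixing $x_*$ with the smallness of $\rho(x_t,x_{t+1})$ to show only maps fixing $x_*$ can act, after which Fej\'er monotonicity via (A2) at $z=x_*$ traps the whole sequence. Your ``dichotomy'' is exactly the paper's $I_1/I_2$ decomposition, and your observation that the case $I_2=\emptyset$ is vacuous replaces the paper's explicit (but inessential) verification via the subsequence that some $\widehat p\in\phi(x_*)$ satisfies $T_{\widehat p}(x_*)=x_*$.
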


Theorem 2.2 generalizes the main result of \cite{tam18a} which establishes a local convergence of the iterative algorithm for iterates starting from a point belonging to a neighborhood of a strong fixed point belonging to the set $\bar F(T)$.

\section{An auxiliary result}

\begin{lemma}
Assume that $M,\epsilon>0$ and that $z_* \in C$ satisfies
$$T_i(z_*)=z_*,\;i=1,\dots,m. \eqno (6)$$
Then there exists $\delta>0$ such that for each $s \in \{1,\dots,m\}$ and each $x \in C\cap B(\theta,M)$ satisfying
$$\rho(x,T_s(x))>\epsilon \eqno (7)$$
the inequality
$$\rho(z_*,T_s(x))\le \rho(z_*,x)-\delta \eqno (8)$$
is true.
\end{lemma}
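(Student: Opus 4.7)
The plan is to argue by contradiction, using the compactness assumption (A1), the continuity of each $T_s$, and the strict inequality provided by (A2) at a cluster point.

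Suppose no such $\delta$ exists. Then for every natural number $n$ there are an index $s_n \in \{1,\dots,m\}$ and a point $x_n \in C \cap B(\theta,M)$ such that
$$\rho(x_n, T_{s_n}(x_n)) > \epsilon \quad \text{and} \quad \rho(z_*, T_{s_n}(x_n)) > \rho(z_*, x_n) - 1/n.$$
Because $\{1,\dots,m\}$ is finite, I would pass to a subsequence on which $s_n$ equals a fixed index $s$. By assumption (A1), the set $C \cap B(\theta,M)$ is compact, so a further subsequence gives $x_n \to \bar x \in C \cap B(\theta,M)$. The continuity of $T_s$ then yields $T_s(x_n) \to T_s(\bar x)$.

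Passing to the limit in the two displayed inequalities gives
$$\rho(\bar x, T_s(\bar x)) \ge \epsilon > 0 \quad \text{and} \quad \rho(z_*, T_s(\bar x)) \ge \rho(z_*, \bar x).$$
The first inequality forces $\bar x \notin \mbox{Fix}(T_s)$. But then, since $z_* \in \mbox{Fix}(T_s)$ by (6), assumption (A2) applied with $z = z_*$ and $y = \bar x$ gives the strict inequality $\rho(z_*, T_s(\bar x)) < \rho(z_*, \bar x)$, which contradicts the second inequality above.

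The main obstacle is purely bookkeeping: ensuring the compactness hypothesis (A1) applies to the right ball, that the index $s$ can be fixed along the subsequence (handled by the finiteness of $\{1,\dots,m\}$), and that the cluster point $\bar x$ indeed lies outside $\mbox{Fix}(T_s)$ so that the strict form of (A2) can be invoked. Once these are arranged, the contradiction drops out immediately.
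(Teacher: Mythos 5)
Your proof is correct and follows essentially the same route as the paper: contradiction, compactness via (A1), continuity of $T_s$, and the strict inequality in (A2) at the limit point. The only cosmetic difference is that you fix the index $s$ by pigeonholing along the sequence, whereas the paper fixes $s$ at the outset and obtains a $\delta$ for each $s$ separately.
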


\begin{proof}
Let $s \in \{1,\dots,m\}$. It is sufficient to show that there exists $\delta>0$ such that for each $x \in C\cap B(\theta,M)$ satisfying (7) inequality (8) is true. Assume the contrary. Then for each integer $k \ge 1$, there exists
$$x_k \in C\cap B(\theta,M) \eqno (9)$$
such that
$$\rho(x_k,T_s(x_k))>\epsilon \eqno (10)$$
and
$$\rho(z_*,T_s(x_k))> \rho(z_*,x_k)-k^{-1}. \eqno (11)$$
In view of (A1) and (9), extracting a subsequence and re-indexing, we may assume without loss of generality that there exists
$$x_*=\lim_{k \to \infty}x_k. \eqno (12)$$
By (9)-(12) and the continuity of $T_s$,
$$\rho(x_*,\theta)\le M,$$
$$\rho(x_*,T_s(x_*))=\lim_{k \to \infty}\rho(x_k,T_s(x_k))\ge  \epsilon$$
and
$$\rho(z_*,T_s(x_*))\ge \rho(z_*,x_*).$$
This contradicts (6) and (A2). The contradiction we have reached proves Lemma 3.1.
\end{proof}

\section{Proof of Theorem 2.1} By (5), there exists
$$z_* \in B(\theta,M)\cap \bar F(T). \eqno (13)$$
Lemma 3.1 implies that there exists $\delta \in (0,\epsilon)$ such that the following property holds:

(a) for each $s \in \{1,\dots,m\}$ and each $x \in C\cap B(z_*,2M)$ satisfying
$$\rho(x,T_s(x))>\epsilon $$
we have
$$\rho(z_*,T_s(x))\le \rho(z_*,x)-\delta.$$

Choose a natural number
$$Q \ge 2M\delta^{-1}. \eqno (14)$$
Assume that $\{x_i\}_{i=0}^{\infty}\subset C$,
$$\rho(x_0,\theta) \le M \eqno (15)$$
and that for each integer $t \ge 0$,
$$x_{t+1}\in F(x_t). \eqno (16)$$
Let $t \ge 0$ be an integer. By (2) and (16), there exists $s \in \{1,\dots,m\}$ such that
$$x_{t+1}=T_s(x_t). \eqno (17)$$
Assumption (A2) and equations (3), (13) and (17) imply that
$$\rho(z_*,x_{t+1}) =\rho(z_*,T_s(x_t))\le \rho(z_*,x_t). \eqno (18)$$
Since $t$ is an arbitrary nonnegative integer equations (13), (15) and (18) imply that for each integer $i \ge 0$,
$$\rho(z_*,x_i) \le \rho(z_*,x_0) \le 2M \eqno (19)$$
and
$$\rho(x_i,\theta)\le 3M.$$
Assume that
$$\rho(x_{t+1},x_t)>\epsilon. \eqno (20)$$ Property (a) and equations (17), (19) and (20) imply that
$$\rho(z_*,x_{t+1})=\rho(z_*,T_s(x_t))\le \rho(z_*,x_t)-\delta.$$
Thus we have shown that the following property holds:

(b) if an integer $t \ge 0$ satisfies (20), then
$$\rho(z_*,x_{t+1})\le \rho(z_*,x_t)-\delta.$$

Assume that $n \ge 1$ is an integer. Property (b) and equations (18)-(20) imply that
$$2M \ge \rho(z_*,x_0) \ge \rho(z_*,x_0)-\rho(z_*,x_{n+1})$$
$$=\sum_{t=0}^n(\rho(z_*,x_t)-\rho(z_*,x_{t+1}))$$
$$\ge \sum\{\rho(z_*,x_t)-\rho(z_*,x_{t+1}):\; t \in \{0,\dots,n\},\; \rho(x_t,x_{t+1})>\epsilon\}$$
$$\ge \delta \mbox{Card} (\{t \in \{0,\dots,n\}:\; \rho(x_t,x_{t+1})>\epsilon\})$$
and in view of (14),
$$\mbox{Card} (\{t \in \{0,\dots,n\}:\; \rho(x_t,x_{t+1})>\epsilon\})\le 2M\delta^{-1}\le  Q.$$
Since $n$ is an arbitrary natural number we conclude that
$$\mbox{Card} (\{t \in \{0,1,\dots\}:\; \rho(x_t,x_{t+1})>\epsilon\})\le   Q.$$
Since $\epsilon$ is any element of $(0,1)$ Theorem 2.1 is proved.

\section{Proof  of Theorem 2.2} In view of Theorem 2.1, the sequence $\{x_t\}_{t=0}^{\infty}$ is bounded. In view of (A1), it has a limit point $x_* \in C$
and a subsequence $\{x_{t_k}\}_{k=0}^{\infty}$ such that
$$x_*=\lim_{k \to \infty}x_{t_k}. \eqno (21)$$ In view of (A3) and (21), we may assume without loss of generality that
$$\phi(x_{t_k}) \subset \phi(x_*),\;k=1,2,\dots \eqno (22)$$
and that there exists
$$\widehat p \in \phi(x_*)$$
such that
$$x_{t_k+1}=T_{\widehat p}(x_{t_k}),\;k=1,2,\dots. \eqno (23)$$
It follows from Theorem 2.1, the continuity of $T_{\widehat p}$ and equations (21) and (23) that
$$T_{\widehat p}(x_*)=\lim_{k \to \infty}T_{\widehat p}(x_{t_k})=\lim_{k \to \infty}x_{t_k+1}=\lim_{k \to \infty} x_{t_k}=x_*. \eqno (24)$$
Set
$$I_1=\{i \in \phi(x_*):\; T_i(x_*)=x_*\},\; I_2=\phi(x_*)\setminus I_1. \eqno (25)$$
In view of (24) and (25),
$$\widehat p \in I_1.$$
Fix $\delta_0\in (0,1)$ such that
$$\rho(x_*,T_i(x_*))>2\delta_0,\; i \in I_2. \eqno (26)$$
Assumption (A3), the continuity of $T_i,\;i=1,\dots,m$ and (26) imply that there exists $\delta_1 \in (0,\delta_0)$ such that for each $x \in B(x_*,\delta_1)\cap C$,
$$\phi(x)\subset \phi(x_*), \eqno (27)$$
$$\rho(x,T_i(x))>\delta_0,\; i \in I_2. \eqno (28)$$
Theorem 2.1 implies that there  exists an integer $q_1\ge 1$ such that for each integer $t \ge q_1$,
$$\rho(x_t,x_{t+1}) \le \delta_0/2. \eqno (29)$$
Assume that
$$\epsilon \in (0,\delta_1), \eqno (30)$$
$$t \ge q_1 \eqno (31)$$
is an integer and that
$$\rho(x_t,x_*) \le \epsilon. \eqno (32)$$
It follows from (27), (28), (30) and (32) that
$$\phi(x_t) \subset \phi(x_*) \eqno (33)$$
and
$$\rho(x_t,T_i(x_t))>\delta_0,\; i \in I_2. \eqno (34)$$
In view of (33), there exists
$$s \in \phi(x_*)$$ such that
$$x_{t+1}=T_s(x_t). \eqno (35)$$. By (29), (31) and (35),
$$\rho(x_t,T_s(x_t))=\rho(x_t,x_{t+1}) \le \delta_0/2. \eqno (36)$$
It follows from (25), (34) and (36) that
$$s \in I_1,\; T_s(x_*)=x_*.$$
Combined with assumption (A2) and equations (32) and (35) this implies that
$$\rho(x_{t+1},x_*) =\rho(T_s(x_t),x_*)\le \rho(x_t,x_*) \le \epsilon.$$
Thus we have shown that if $t \ge q_1$ is an integer and (32) holds, then (33) is true and if $s \in \phi(x_*)$ and (35) holds, then $s \in I_1$ and $\rho(x_{t+1},x_*)\le \epsilon$.
By induction and (21), we obtain that
$$\rho(x_i,x_*) \le \epsilon $$
for all sufficiently large natural numbers $i$. 
Since $\epsilon $ i an arbitrary element of $(0,\delta_1)$ we conclude that
$$\lim_{t \to \infty}x_t=x_*$$ and Theorem 2.2 is proved.

\section{Kraasnosel'ski-Mann iterations}

Assume that $(X,\|\cdot\|)$ is a normed space and that $\rho(x,y)=\|x-y\|,\;x,y \in X$.
We use the notation, definitions and assumptions introduced in Section 2. In particular, we assume that assumptions (A1)-(A3) hold. Suppose that the set $C$ is convex and denote by $Id:X\to X$ the identity operator: $Id(x)=x$, $x \in X$.
Let
$$\kappa \in (0,2^{-1}).$$
We consider Kraasnosel'ski-Mann iteration associated with our set-valued mapping $T$ and obtain the global convergence result (see  Theorem 6.2 below) which generalizes  
the local convergence result of \cite{dao}  for iterates starting from a point belonging to a neighborhood of a strong fixed point belonging to the set $\bar F(T)$.

The following result is proved in Section 7.

\begin{theorem}
Assume that $M>0$, $\epsilon \in (0,1)$ and that
$$\bar F(T)\cap B(\theta,M) \not =\emptyset \eqno (37)$$
Then there exists an integer $Q\ge 1$ such that for each
$$\{\lambda_t\}_{t=0}^{\infty}\subset (\kappa,1-\kappa) \eqno (38)$$
and each
sequence $\{x_i\}_{i=0}^{\infty}\subset C$
 which satisfies
$$\|x_0-\theta\| \le M $$
and 
$$x_{t+1}\in (1-\lambda_t)x_t+\lambda_tT(x_t) \mbox { for each integer }t\ge 0 \eqno (39)$$
the inequality
$$\|x_t-\theta\|\le 3M$$ holds for all integers $t \ge 0$,
$$\mbox{Card}(\{t \in \{0,1,\dots,\}:\;\|x_t-x_{t+1}\|> \epsilon\})\le Q$$
and $\lim_{t \to \infty}\|x_t-x_{t+1}\|=0$.\end{theorem}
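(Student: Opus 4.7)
My plan is to follow the proof of Theorem 2.1 almost verbatim, with the triangle inequality absorbing the extra convex combination introduced by the Krasnosel'ski--Mann update. By (37) I first pick a strong fixed point $z_* \in \bar F(T)\cap B(\theta,M)$. Applying Lemma 3.1 with $3M$ in place of $M$ and keeping the threshold $\epsilon$ yields a $\delta\in(0,\epsilon)$ such that whenever $s\in\{1,\dots,m\}$, $x\in C\cap B(\theta,3M)$, and $\|x-T_s(x)\|>\epsilon$, one has $\|z_*-T_s(x)\|\le\|z_*-x\|-\delta$.

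Next, I prove by induction that $\|x_t-z_*\|\le\|x_0-z_*\|\le 2M$, so $\|x_t-\theta\|\le 3M$ for every $t$. Writing $x_{t+1}=(1-\lambda_t)x_t+\lambda_t T_{s_t}(x_t)$ for some $s_t\in\phi(x_t)$, the triangle inequality combined with the paracontraction inequality $\|T_{s_t}(x_t)-z_*\|\le\|x_t-z_*\|$ from (A2) gives $\|x_{t+1}-z_*\|\le\|x_t-z_*\|$. This is the only place where convexity of the norm is used, and no Hilbert-space structure is required.

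Third, I run the telescoping argument. If $\|x_t-x_{t+1}\|>\epsilon$, then the identity $\|x_t-x_{t+1}\|=\lambda_t\|x_t-T_{s_t}(x_t)\|$ together with $\lambda_t<1$ forces $\|x_t-T_{s_t}(x_t)\|>\epsilon$, so Lemma 3.1 applies and yields $\|z_*-T_{s_t}(x_t)\|\le\|z_*-x_t\|-\delta$. The convex-combination estimate then gives
$$\|z_*-x_{t+1}\|\le(1-\lambda_t)\|z_*-x_t\|+\lambda_t(\|z_*-x_t\|-\delta)=\|z_*-x_t\|-\lambda_t\delta\le\|z_*-x_t\|-\kappa\delta,$$
using the crucial lower bound $\lambda_t\ge\kappa$ from (38). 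Telescoping then bounds the cardinality of $\{t:\|x_t-x_{t+1}\|>\epsilon\}$ by $2M/(\kappa\delta)$, so any integer $Q\ge 2M/(\kappa\delta)$ works, and replaying the argument with any $\epsilon'\in(0,1)$ in place of $\epsilon$ yields $\lim_t\|x_t-x_{t+1}\|=0$.

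The only real deviation from the proof of Theorem 2.1 is the role of $\kappa$: it converts the per-operator decrease guaranteed by paracontraction into a per-iteration decrease for the relaxed update. I expect no genuine obstacle, but care is needed to apply the threshold $\epsilon$ to $\|x_t-T_{s_t}(x_t)\|$ rather than directly to $\|x_t-x_{t+1}\|$; the bound $\lambda_t<1$ makes the implication between the two automatic.
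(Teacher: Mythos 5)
Your proposal is correct and follows essentially the same route as the paper's own proof of Theorem 6.1: the same choice of $z_*$, the same application of Lemma 3.1, the same use of convexity of the norm to get Fej\'er monotonicity, the same observation that $\|x_t-x_{t+1}\|>\epsilon$ and $\lambda_t<1$ force $\|x_t-T_s(x_t)\|>\epsilon$, and the same per-step decrease $\lambda_t\delta\ge\kappa\delta$ feeding the telescoping bound $Q\ge 2M(\kappa\delta)^{-1}$. No gaps.
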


The following result is proved in Section 8.

\begin{theorem}
Assume that
$$\{\lambda_t\}_{t=0}^{\infty}\subset (\kappa,1-\kappa) $$
and that
a  sequence $\{x_t\}_{t=0}^{\infty}\subset C$
satisfies (39).
Then there exist
$$x_*=\lim_{t \to \infty}x_t$$ and a natural number $t_0$ such that for each integer $t \ge t_0$
$$\phi(x_t)\subset \phi(x_*)$$
and if an integer $i \in \phi(x_t)$ satisfies $$x_{t+1}=\lambda_tT_i(x_t)+(1-\lambda)x_t,$$ then
$$T_i(x_*)=x_*.$$
\end{theorem}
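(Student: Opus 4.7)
The plan is to adapt the argument of Theorem 2.2 to the Krasnosel'ski--Mann setting, taking care that the relaxation factor $\lambda_t \in (\kappa,1-\kappa)$ does not disrupt the underlying cluster-point analysis. First I would invoke Theorem 6.1 (with $M$ chosen large enough that $B(\theta,M)$ contains both $x_0$ and some fixed element of the nonempty set $\bar F(T)$) to obtain boundedness of $\{x_t\}$ and the crucial fact $\|x_t - x_{t+1}\| \to 0$. Assumption (A1) then produces a subsequence $\{x_{t_k}\}$ converging to some $x_* \in C$. Using (A3) and the finiteness of $\phi(x_*)$, I pass to a further subsequence along which a single index $\widehat p \in \phi(x_*)$ realizes each step, i.e.\
$$x_{t_k+1} = (1-\lambda_{t_k})x_{t_k} + \lambda_{t_k} T_{\widehat p}(x_{t_k}).$$
Because $\lambda_{t_k} \ge \kappa > 0$, the identity $\|x_{t_k+1} - x_{t_k}\| = \lambda_{t_k}\|T_{\widehat p}(x_{t_k}) - x_{t_k}\|$ forces $T_{\widehat p}(x_{t_k}) - x_{t_k} \to 0$, and continuity of $T_{\widehat p}$ delivers $T_{\widehat p}(x_*) = x_*$.

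Next, as in the proof of Theorem 2.2, I partition $\phi(x_*)$ into $I_1 = \{i \in \phi(x_*) : T_i(x_*) = x_*\}$ (which contains $\widehat p$) and $I_2 = \phi(x_*)\setminus I_1$. I choose $\delta_0 > 0$ with $\|x_* - T_i(x_*)\| > 2\delta_0$ for every $i \in I_2$, then use (A3) together with continuity of each $T_i$ to pick $\delta_1 \in (0,\delta_0)$ such that every $x \in B(x_*,\delta_1) \cap C$ satisfies $\phi(x) \subset \phi(x_*)$ and $\|x - T_i(x)\| > \delta_0$ for every $i \in I_2$. Using Theorem 6.1 again, I select an integer $q_1$ with $\|x_t - x_{t+1}\| \le \kappa\delta_0/2$ for all $t \ge q_1$; this is where the lower bound $\kappa$ on $\lambda_t$ enters essentially.

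Finally I run the dichotomy: fix $\epsilon \in (0,\delta_1)$, consider $t \ge q_1$ with $\|x_t - x_*\| \le \epsilon$, and write $x_{t+1} = (1-\lambda_t)x_t + \lambda_t T_s(x_t)$ with $s \in \phi(x_t) \subset \phi(x_*)$. If $s$ were in $I_2$, then $\|T_s(x_t) - x_t\| > \delta_0$ would give $\|x_{t+1} - x_t\| \ge \kappa\delta_0 > \kappa\delta_0/2$, contradicting the choice of $q_1$. Hence $s \in I_1$, so $T_s(x_*) = x_*$, and the nonexpansive half of (A2) combined with the triangle inequality gives
$$\|x_{t+1} - x_*\| \le (1-\lambda_t)\|x_t - x_*\| + \lambda_t\|T_s(x_t) - T_s(x_*)\| \le \|x_t - x_*\| \le \epsilon.$$
Inducting from any $t_k \ge q_1$ with $\|x_{t_k} - x_*\| \le \epsilon$ traps every subsequent iterate in $B(x_*,\epsilon)$, and since $\epsilon \in (0,\delta_1)$ is arbitrary, $x_t \to x_*$. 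The two remaining assertions, the eventual inclusion $\phi(x_t) \subset \phi(x_*)$ and the property that any active index $i$ at step $t$ lies in $I_1$, fall out of the same dichotomy. I expect the main subtlety to be the calibration of $\delta_0$, $\delta_1$, and $q_1$ so that the $\kappa$ factor in the relaxation cannot mask an $I_2$-selection; the hypothesis $\lambda_t \ge \kappa$ is exactly what makes the contradiction go through.
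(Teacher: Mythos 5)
Your proposal is correct and follows essentially the same route as the paper's proof: Theorem 6.1 plus (A1) to extract a cluster point, a single active index $\widehat p$ along the subsequence, the $I_1/I_2$ dichotomy calibrated by $\delta_0$, $\delta_1$, and the $\kappa\delta_0/2$ bound on successive steps, and then Fej\'er-type monotonicity near $x_*$ to trap the tail. The only (harmless) variation is that you deduce $T_{\widehat p}(x_*)=x_*$ directly from $\lambda_{t_k}\ge\kappa$ and $\|x_{t_k+1}-x_{t_k}\|\to 0$, whereas the paper first passes to a convergent subsequence of $\lambda_{t_k}$ and solves the limiting fixed-point relation.
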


\section{Proof of Theorem 6.1}

By (37), there exists
$$z_* \in B(\theta,M)\cap \bar F(T). \eqno (40)$$
Lemma 3.1 implies that there exists $\delta \in (0,\epsilon)$ such that the following property holds:

(c) for each $s \in \{1,\dots,m\}$ and each $x \in C\cap B(z_*,2M)$ satisfying
$$\rho(x,T_s(x))>\epsilon $$
we have
$$\rho(z_*,T_s(x))\le \rho(z_*,x)-\delta.$$

Choose a natural number
$$Q \ge 2M\delta^{-1}\kappa^{-1}. \eqno (41)$$
Assume that (38) holds and that a sequence $\{x_i\}_{i=0}^{\infty}\subset C$ satisfies (39) and
$$\|x_0-\theta\| \le M. \eqno (42)$$
Let $t \ge 0$ be an integer. By (2) and (39), there exists $s \in \{1,\dots,m\}$ such that
$$x_{t+1}=\lambda_tT_s(x_t)+(1-\lambda_t)x_t. \eqno (43)$$
Assumption (A2) and equations (3), (40) and (43) imply that
$$\|x_{t+1}-z_*\| =\|\lambda_tT_s(x_t)+(1-\lambda_t)x_t-z_*\|$$
$$\le \lambda_t\|T_s(x_t)-z_*\|+(1-\lambda_t)\|x_t-z_*\|\le \|z_*-x_t\|. \eqno (44)$$
Since $t$ is an arbitrary nonnegative integer equations (40), (42) and (44) imply that for each integer $i \ge 0$,
$$\|z_*-x_i\| \le \|z_*-x_0\| \le 2M $$
and
$$\|x_i-\theta\|\le 3M.$$
Assume that
$$\|x_{t+1}-x_t\|>\epsilon. \eqno (45)$$
It follows from (38),  (43) and (45) that
$$\epsilon<\|x_{t+1}-x_t\|=\|\lambda_tT_s(x_t)+(1-\lambda_t)x_t-x_t\|=\lambda_t\|T_s(x_t)-x_t\|$$
and
$$\|T_s(x_t)-x_t\|\ge \epsilon \lambda_t^{-1}\ge \epsilon(1-\kappa)^{-1}. \eqno (46)$$
Property (c) and equation (46) imply that
$$\|z_*-T_s(x_t)\|\le \|z_*-x_t\|-\delta. \eqno (47)$$
By (38), (43) and (47),
$$\|x_{t+1}-z_*\|=\|\lambda_tT_s(x_t)+(1-\lambda_t)x_t-z_*\|$$
$$\le \lambda_t\|T_s(x_t)-z_*\|+(1-\lambda_t)\|x_t-z_*\|$$
$$\le  \lambda_t(\|x_t-z_*\|-\delta)+(1-\lambda_t)\|x_t-z_*\|$$
$$\le\|x_t-z_*\|-\lambda_t\delta \le \|x_t-z_*\|-\delta\kappa. \eqno (48)$$

Thus we have shown that the following property holds:

(d) if an integer $t \ge 0$ satisfies (45), then
$$\|z_*-x_{t+1}\|\le \|z_*-x_t\|-\delta\kappa.$$

Assume that $n \ge 1$ is an integer. Property (d) and equations (40), (42) and (44) imply that
$$2M \ge \|z_*-x_0\| \ge \|z_*-x_0\|-\|z_*-x_{n+1}\|$$
$$=\sum_{t=0}^n(\|z_*-x_t\|-\|z_*-x_{t+1}\|)$$
$$\ge \sum\{\|z_*-x_t\|-\|z_*-x_{t+1}\|:\; t \in \{0,\dots,n\},\; \|x_t-x_{t+1}\|>\epsilon\}$$
$$\ge \delta \kappa \mbox{Card} (\{t \in \{0,\dots,n\}:\; \|x_t-x_{t+1}\|>\epsilon\})$$
and in view of (41),
$$\mbox{Card} (\{t \in \{0,\dots,n\}:\; \|x_t-x_{t+1}\|>\epsilon\})\le 2M(\delta \kappa)^{-1}\le  Q.$$
Since $n$ is an arbitrary natural number we conclude that
$$\mbox{Card} (\{t \in \{0,1,\dots\}:\; \|x_t-x_{t+1}\|>\epsilon\}) \le  Q.$$
Since $\epsilon$ is any element of $(0,1)$ we obtain that
$$\lim_{t \to \infty}\|x_t-x_{t+1}\|=0.$$
Theorem 6.1 is proved.

\section{Proof  of Theorem 6.2}

In view of Theorem 6.1, the sequence $\{x_t\}_{t=0}^{\infty}$ is bounded. In view of (A1), it has a limit point $x_* \in C$
and a subsequence $\{x_{t_k}\}_{k=0}^{\infty}$ such that
$$x_*=\lim_{k \to \infty}x_{t_k}. \eqno (49)$$ In view of (A3) and equations (38), (39) and (49), extracting a subsequence and re-indexing, we may assume without loss of generality that
$$\phi(x_{t_k}) \subset \phi(x_*),\;k=1,2,\dots \eqno (50)$$
and that there exists
$$\widehat p \in \phi(x_*)$$
such that
$$x_{t_k+1}=\lambda_{t_k}T_{\widehat p}(x_{t_k})+(1-\lambda_{t_k})x_{t_k},\;k=1,2,\dots \eqno (51)$$
and that there exists
$$\lambda_*=\lim_{k \to \infty}\lambda_{t_k} \in [\kappa,1-\kappa]. \eqno (52)$$
It follows from Theorem 6.1, the continuity of $T_{\widehat p}$ and equations (49), (51) and (52) that
$$\lambda_*T_{\widehat p}(x_*)+(1-\lambda_*)x_*$$
$$=\lim_{k \to \infty}(\lambda_{t_k}T_{\widehat p}(x_{t_k})+(1-\lambda_{t_k})x_{t_k})$$
$$=\lim_{k \to \infty}x_{t_k+1}=\lim_{k \to \infty} x_{t_k}=x_*. \eqno (53)$$
Set
$$I_1=\{i \in \phi(x_*):\; T_i(x_*)=x_*\},\; I_2=\phi(x_*)\setminus I_1. \eqno (54)$$
In view of (53) and (54),
$$\widehat p \in I_1.$$
Fix $\delta_0\in (0,1)$ such that
$$\|x_*-T_i(x_*)\|>2\delta_0,\; i \in I_2. \eqno (55)$$
Assumption (A3), the continuity of $T_i,\;i=1,\dots,m$ and (55) imply that there exists $\delta_1 \in (0,\delta_0)$ such that for each $x \in B(x_*,\delta_1)\cap C$,
$$\phi(x)\subset \phi(x_*), \eqno (56)$$
$$\|x-T_i(x)\|>\delta_0,\; i \in I_2. \eqno (57)$$
Theorem 6.1 implies that there  exists an integer $q_1\ge 1$ such that for each integer $t \ge q_1$,
$$\|x_t-x_{t+1}\| \le \kappa\delta_0/2. \eqno (58)$$
Assume that
$$\epsilon \in (0,\delta_1), \eqno (59)$$
$$t \ge q_1 \eqno (60)$$
is an integer and that
$$\|x_t-x_*\| \le \epsilon. \eqno (61)$$
It follows from (56), (57), (59) and (61) that
$$\phi(x_t) \subset \phi(x_*) \eqno (62)$$
and
$$\|x_t-T_i(x_t)\|>\delta_0,\; i \in I_2. \eqno (63)$$
In view of (39), there exists
$$s \in \phi(x_t) \subset \phi(x_*)$$ such that
$$x_{t+1}=\lambda_tT_s(x_t)+(1-\lambda_t)x_t. \eqno (64)$$. By (38), (58) and (64),
$$\kappa \delta_0/2 \ge \|x_{t+1}-x_t\|=\lambda_t\|T_s(x_t)-x_t\|$$
and
$$\|x_t-T_s(x_t)\|\le \kappa \delta_0(2\lambda_t)^{-1}\le \delta_0/2. \eqno (65)$$
It follows from (54), (56), (57), (59), (61)  and (65) that
$$s \in I_1,\; T_s(x_*)=x_*.$$
Combined with assumption (A2) and equations (39), (61) and (64) this implies that
$$\|x_{t+1}-x_*\| =\|\lambda_t T_s(x_t)+(1-\lambda_t)x_t-x_*\|$$
$$\le \lambda_t \|T_s(x_t)-x_*\|+(1-\lambda_t)\|x_t-x_*\|$$
$$\le \|x_t-x_*\|\le \epsilon.$$
Thus we have shown that if $t \ge q_1$ is an integer and (61) holds, then $\|x_{t+1}-x_*\|\le \epsilon$.
By induction and (49), we obtain that
$$\|x_i-x_*\| \le \epsilon $$
for all sufficiently large natural numbers $i$. Since
 $\epsilon $ i an arbitrary element of $(0,\delta_1)$ we conclude that
$$\lim_{t \to \infty}x_t=x_*$$ and Theorem 6.2 is proved.

\end{document}